\newcommand{\m}[1]{\mathbbm{#1}}
\newcommand{\q}[1]{\mathcal{#1}}
\newcommand{\e}{\varepsilon}
\DeclareMathOperator{\sgn}{\mathrm{sgn}}
\DeclareMathOperator{\Ai}{\mathrm{Ai}}
\newtheorem{thm}{\rm \bf{Theorem}}
\newtheorem*{thm*}{\rm \bf{Theorem}}
\newtheorem{cor}{\rm \bf{Corollary}}
\newtheorem{defi}{\rm \bf{Definition}}
\newtheorem{nb}{\emph{Remark}}
\def\blfootnote{\xdef\@thefnmark{}\@footnotetext}
\title{
Scaling-sharp dispersive estimates for the Korteweg-de Vries group}
\date{}
\author{Raphaël Côte \and Luis Vega}
\begin{document}

\maketitle

\begin{abstract}
%
%
We prove weighted estimates on the linear KdV group, which are scaling
sharp. This kind of estimates are in the spirit of that used to prove
small data scattering for the generalized KdV equations.
\end{abstract}

The purpose of this short note is to give a simple proof of two
dispersive
estimates which are heavily used in the proof of small data scattering
for the generalized Korteweg-de Vries equations \cite{HN98}.

The proof of these estimates can be easily extended to other dispersive
equations.

Denote $U(t)$ the linear Korteweg-de Vries group, i.e  $v=U(t)\phi$ is
the
solution to
\[ \left\{ \begin{array}{l}
v_t + v_{xxx} = 0, \\
v(t=0) = \phi, \end{array} \right. \quad \text{i.e.} \quad
\widehat{U(t) \phi} =
e^{it \xi^3} \hat \phi \quad \text{or} \quad (U(t)\phi)(x) =
\frac{1}{t^{1/3}} \int \Ai \left( \frac{x-y}{t^{1/3}}
\right) \phi(y) dy, \]
where $\Ai$ is the Airy function
\[ \Ai(z) = \frac{1}{\pi} \int_0^\infty \cos \left(\frac{\xi^3} 3 + \xi
z
\right) d\xi. \]

\begin{thm}
Let $\phi,\psi \in L^2$, such that $x\phi, x\psi \in L^2$. Then
\begin{align}
\| U(t)\phi \|_{L^\infty}^2 & \le 2\|\Ai\|_{L^\infty}^2 t^{-2/3} \| \phi
  \|_{L^2} \| x \phi \|_{L^2}, \\
  \| U(t)\phi U(-t)\psi_x \|_{L^\infty} & \le C t^{-1} (\| \phi
\|_{L^2} \| x\psi \|_{L^2} + \| \psi \|_{L^2} \| x\phi \|_{L^2}).
\end{align}
Furthermore, the constant $2\|\Ai\|_{L^\infty}^2$ in the first estimate
is
optimal.
\end{thm}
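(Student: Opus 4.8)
\medskip
\noindent The plan is to work throughout with the Airy kernel and to reduce the first inequality to a one-dimensional functional inequality. Write $\sigma=t^{1/3}$, so that $U(t)\phi(x)=\sigma^{-1}\int \Ai\big(\tfrac{x-y}{\sigma}\big)\phi(y)\,dy$. Since $|\Ai|\le\|\Ai\|_{L^\infty}$ pointwise, the first step is the crude kernel bound
\[
\|U(t)\phi\|_{L^\infty}\le \sigma^{-1}\|\Ai\|_{L^\infty}\,\|\phi\|_{L^1},
\qquad\text{hence}\qquad
\|U(t)\phi\|_{L^\infty}^2\le t^{-2/3}\|\Ai\|_{L^\infty}^2\,\|\phi\|_{L^1}^2 .
\]
All the remaining content is then carried by the sharp inequality $\|\phi\|_{L^1}^2\le C\,\|\phi\|_{L^2}\|x\phi\|_{L^2}$, which I would prove by Cauchy--Schwarz against the weight $(a^2+x^2)^{-1/2}$,
\[
\|\phi\|_{L^1}=\int \frac{|\phi|}{(a^2+x^2)^{1/2}}\,(a^2+x^2)^{1/2}\le\Big(\int\frac{dx}{a^2+x^2}\Big)^{1/2}\Big(\int (a^2+x^2)|\phi|^2\Big)^{1/2},
\]
followed by optimisation of the free parameter $a>0$. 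The factor $2$ in the final constant is exactly the arithmetic--geometric-mean constant produced at the optimal $a=\|x\phi\|_{L^2}/\|\phi\|_{L^2}$, while $\|\Ai\|_{L^\infty}^2$ comes from the kernel bound.

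\medskip
\noindent For optimality I would exhibit a saturating family. Taking $\phi_n(x)=n^{1/2}\Phi(nx)$ with $\Phi$ the Lorentzian $\Phi(x)=(1+x^2)^{-1}$ (the extremiser of the one-dimensional inequality), the profile concentrates as $n\to\infty$, so $U(t)\phi_n$ is asymptotic to $\sigma^{-1}\big(\int\phi_n\big)\Ai(\cdot/\sigma)$ and $\|U(t)\phi_n\|_{L^\infty}\sim\sigma^{-1}\|\Ai\|_{L^\infty}\|\phi_n\|_{L^1}$; thus the crude kernel bound becomes asymptotically an equality, while the Lorentzian makes $\|\phi_n\|_{L^1}^2=C\|\phi_n\|_{L^2}\|x\phi_n\|_{L^2}$ an identity for every $n$. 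Hence no smaller constant is admissible.

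\medskip
\noindent For the second inequality one cannot bound the two factors separately: writing the left-hand side as
\[
U(t)\phi(x)\,U(-t)\psi_x(x)=-\frac{1}{\sigma^{3}}\iint \Ai\Big(\frac{x-y}{\sigma}\Big)\Ai'\Big(\frac{z-x}{\sigma}\Big)\phi(y)\psi(z)\,dy\,dz,
\]
the kernel $\Ai'$ of the second factor is \emph{unbounded} ($\Ai'(s)\sim |s|^{1/4}$ as $s\to-\infty$), so $\|U(-t)\psi_x\|_{L^\infty}$ cannot be controlled by the first argument and is in fact too large for concentrated $\psi$. The mechanism producing the gain is instead the opposite propagation of the two factors, most transparent on the Fourier side: with output frequency $\mu=\xi+\eta$ the combined phase is $\Theta=t\big((\mu-\eta)^3-\eta^3\big)$ and
\[
\partial_\eta\Theta=-3t\big((\mu-\eta)^2+\eta^2\big),
\]
which vanishes only at $\xi=\eta=0$. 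A single integration by parts in $\eta$ against $e^{i\Theta}$ therefore gains the full factor $t^{-1}$, and, crucially, lets the derivative fall on $\widehat\phi$ and $\widehat\psi$, i.e. produces exactly the weights $x\phi$ and $x\psi$. Estimating the two resulting pieces by Cauchy--Schwarz and applying the arithmetic--geometric-mean inequality to $\big(\|\phi\|_{L^2}\|x\phi\|_{L^2}\|\psi\|_{L^2}\|x\psi\|_{L^2}\big)^{1/2}\le\tfrac12\big(\|\phi\|_{L^2}\|x\psi\|_{L^2}+\|\psi\|_{L^2}\|x\phi\|_{L^2}\big)$ yields the symmetric right-hand side.

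\medskip
\noindent The main obstacle I anticipate is the degeneracy of the phase in the second estimate at the origin $(\xi,\eta)=(0,0)$: the symbol $\eta/\partial_\eta\Theta$ produced by the integration by parts is singular there, so the low-frequency interaction must be isolated and treated separately (for instance by a direct bound on the corresponding portion of the bilinear kernel). Controlling this region uniformly in $x$---equivalently, making genuine use of the oscillation of $\Ai'$ rather than of its infinite supremum---is the delicate technical point, whereas the first estimate is comparatively soft once it is recast as the one-dimensional inequality above.
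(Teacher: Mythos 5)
For the first inequality your route is genuinely different from the paper's, and it is structurally sound, but your constant is wrong --- and instructively so. The paper never passes through $\|\phi\|_{L^1}$: it writes $|U(1)\phi(x)|^2$ as a double integral, inserts the factor $\frac{y-z}{y-z}$, recognizes the two inner integrals as Hilbert transforms, and concludes by Cauchy--Schwarz and $L^2$ boundedness of $\q H$, with optimality coming from the extremizers $y\psi=\lambda \q H(\psi)$ (the same Lorentzians you use). Your chain is softer, but redo the arithmetic: $\int\frac{dx}{a^2+x^2}=\frac{\pi}{a}$, so optimizing in $a$ gives $\|\phi\|_{L^1}^2\le 2\pi\|\phi\|_{L^2}\|x\phi\|_{L^2}$ (equality for Lorentzians), and your argument therefore produces the constant $2\pi\|\Ai\|_{L^\infty}^2$, not $2\|\Ai\|_{L^\infty}^2$. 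This loss is not removable. With the normalization $U(1)\phi=\Ai*\phi$ used in the paper's proof, your own family $\phi_n(x)=n^{1/2}(1+(nx)^2)^{-1}$ satisfies, by dominated convergence, $|U(1)\phi_n(x_0)|\to\pi\|\Ai\|_{L^\infty}n^{-1/2}$ at the maximum point $x_0$ of $|\Ai|$, while $\|\phi_n\|_{L^2}\|x\phi_n\|_{L^2}=\frac{\pi}{2n}$, so that
\[
\frac{\|U(1)\phi_n\|_{L^\infty}^2}{\|\phi_n\|_{L^2}\|x\phi_n\|_{L^2}}\longrightarrow 2\pi\|\Ai\|_{L^\infty}^2 .
\]
Hence the sharp constant is $2\pi\|\Ai\|_{L^\infty}^2$, and the inequality with the printed constant $2\|\Ai\|_{L^\infty}^2$ cannot be proved by your method or any other: it fails on this family. (The paper's proof contains the mirror-image slip: the operator it calls $\q H$ is convolution with $\mathrm{vp}\left(\frac1x\right)$, whose $L^2$ operator norm is $\pi$, not $1$; the isometric Hilbert transform carries a prefactor $\frac1\pi$.) So your approach, with the $\pi$ restored both in the upper bound and in the saturation computation, proves the corrected sharp statement by a more elementary argument than the paper's, and has the merit of exposing the right constant.

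For the second inequality there is a genuine gap, which you flag but do not close, and it is not a technicality. After your integration by parts in $\eta$, the two terms where $\partial_\eta$ falls on $\hat\phi$ or on $\hat\psi$ have kernels $O\left((|\xi|+|\eta|)^{-1}\right)$ and can indeed be estimated, uniformly in $x$, by $Ct^{-1}\|x\phi\|_{L^2}\|\psi\|_{L^2}$ and $Ct^{-1}\|\phi\|_{L^2}\|x\psi\|_{L^2}$ (Hilbert's inequality). But the term where $\partial_\eta$ falls on the symbol $\eta/\partial_\eta\Theta$ has kernel of size $(\xi^2+\eta^2)^{-1}$, and against $|\hat\phi(\xi)||\hat\psi(\eta)|$ with $\hat\phi(0),\hat\psi(0)\neq 0$ the resulting integral diverges logarithmically at the origin: the global integration by parts is not even legitimate. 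One must excise a neighbourhood of $(\xi,\eta)=(0,0)$, then bound the excised bilinear piece and the boundary terms the excision creates by $t^{-1}(\|\phi\|_{L^2}\|x\psi\|_{L^2}+\|\psi\|_{L^2}\|x\phi\|_{L^2})$ uniformly in $x$ --- and that is the actual proof, which your proposal defers. The paper avoids your obstacle entirely by staying on the physical side: the same $\frac{y-z}{y-z}$ insertion as in the first part yields two Hilbert-transform pairings involving $\Ai'$, and the unboundedness of $\Ai'$ is absorbed by the weight $\omega_x(y)=(1+|x-y|)^{-1/2}$: the Airy asymptotics give $|\Ai'(x-y)|^2\omega_x(y)\le C$ and $|\Ai(x-y)|^2\omega_x(y)^{-1}\le C$, while $\omega_x^{-1}\in A_2$ uniformly in $x$, so $\q H$ is bounded on $L^2(\omega_x^{-1})$ with an $x$-independent constant; weighted Cauchy--Schwarz then closes the estimate immediately. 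If you wish to keep the Fourier-side strategy you must supply that low-frequency analysis; the weighted $A_2$ estimate is the clean substitute for it.
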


\begin{nb}
These estimates are often used with $\phi$ replaced by $U(-t) \phi$~:
denoting $J(t) = U(t)xU(-t)$, they take the form
\begin{align}
\| \phi \|_{L^\infty}^2 & \le C t^{-2/3} \| \phi \|_{L^2} \| J(t) \phi
\|_{L^2}, \label{estlinfty} \\
\| \phi \psi_x \|_{L^\infty} & \le C t^{-1} (\| \phi
\|_{L^2} \| J(t) \psi \|_{L^2} + \| \psi \|_{L^2} \| J(t) \phi
\|_{L^2}).
\end{align}
\end{nb}

\begin{proof}
Due to a scaling argument (and representation in term of the Airy
function), we are reduced to show that
\[ \| U(1)\phi \|_{L^\infty} \le C \| \phi \|_{L^2} \| x \phi
\|_{L^2}, \]
and similarly for the second inequality. Hence we consider
\[ (U(1) \phi)(x) = \int \Ai(x-y) \phi(y) dy, \]
and we recall that the Airy function satisfies $|\Ai(x)| \le
C(1+|x|)^{-1/4}$ and $|\Ai'(x)| \le C(1+|x|)^{1/4}$. Then
\begin{align*}
|U(1) \phi|^2(x) & =  \iint \Ai(x-y) \phi(y) \Ai(x-z) \bar\phi(z)
\frac{y-z}{y-z} dydz \\
& = \int \Ai(x-y) y \phi(y) \underbrace{\int \frac{\Ai(x-z)}{y-z} \bar
\phi(z) dz}_{\q H_{z \mapsto y}(\Ai(x- z)\phi(z))(y)} dy \\
& \qquad - \int \Ai(x-z)  z\bar\phi(z) \underbrace{\int
\frac{\Ai(x-y)}{y-z} \phi(y) dy}_{-\q H_{y \mapsto z}(\Ai(x-
y)\phi(y))(z)}
dz \\
& = 2\Re \int \Ai(x-y) y \phi(y) \q H_{z \mapsto y}(\Ai(x-
z)\phi(z))(y) dy,
\end{align*}
where $\q H$ denotes the Hilbert transform (and with the slight abuse of
notation $\frac 1 x$ for $\mathrm{vp} \left ( \frac 1 x \right)$).
As $\q H : L^2 \to L^2$ is isometric and hence continuous (with norm 1),
and $\Ai \in L^\infty$, we get
\begin{align}
|U(1) \phi|^2(x) & \le 2 \| \Ai(x-y) y \phi(y) \|_{L^2(dy)} \| \q
H(\Ai(x-
\cdot)\phi)(y) \|_{L^2(dy)} \label{CSineq} \\
& \le 2 \| \Ai \|_{L^\infty}^2 \| y \phi \|_{L^2} \| \phi
\|_{L^2}. \label{Linftyineq}
\end{align}
This is the first inequality. Let us now prove that the constant is
sharp.

First consider the minimizers in the following Cauchy-Schwarz
inequality~:
\begin{equation} \label{modelineq}
\left| \int y\psi(y) \q H(\psi)(y) dy \right| \le \| y \psi(y)
\|_{L^2(dy)} \| \psi \|_{L^2}.
\end{equation}
There is equality if $y\psi(y) = \lambda \q H(\psi)(y)$ for some
$\lambda
\in \m C$. Then a Fourier Transform
shows that $\partial_\xi \hat \psi(\xi) = \lambda \sgn
\xi \hat \psi(\xi)$, hence $\hat \psi(\xi) = C \exp(-\lambda |x|)$, or
equivalenty, one has equality in (\ref{modelineq}) as soon as
\begin{equation} \label{minimizers}
\psi(y) = \frac{C}{1 + (y/\lambda)^2} \quad \text{for some} \quad
\lambda,C
\in \m R.
\end{equation}
(Notice that all the functions involved lie in $L^2$)

We now go back to (\ref{Linftyineq}). Let $x_0 \in \m R$ where $|\Ai|$
reaches its maximum. Now as $\Ai(x_0) \ne 0$, let $\e>0$ such that for
all
$y \in [-\e,\e]$,
$|\Ai(x_0 -y)| \ge |\Ai(x_0)|/2$, and consider the sequence of functions
\[ \phi_n(x) = \frac{\sqrt n}{1+ (ny)^2} \frac{\m 1_{|y| \le
\e}}{\Ai(x_0-y)}. \]
Denote $\psi_n(y) = \frac{\m 1_{|y| \le n\e}}{1 + y^2}$. As $\Ai(x_0 -y)
\phi_n(ny)= \sqrt n \psi_n(ny)$,
\begin{align*}
|U(1)\phi_n|^2(x_0) & = 2 \int y \sqrt n \psi_n(ny) \q H_{z \to
y}(\sqrt n
\psi_n(nz))(y) dy & = \frac 2 n \int y \psi_n(ny) \q
H(\psi_n)(ny) dy.
\end{align*}
One easily
sees that $\psi_n(y) \to \frac{1}{1+|y|^2}$ in $L^2$ and $y\psi_n(y) \to
\frac{y}{1+|y|^2}$ in $L^2$, and hence, in view of (\ref{minimizers}),
as $\q
H$ is homogeneous of degree 0 and $L^2$ isometric, we have
\begin{align*}
|U(1)\phi_n|^2(x_0) & \sim \frac 2 n \int \frac{y}{1+|y|^2} \q
H(\frac{1}{1+ |\cdot|^2})(y) dy \sim \frac 2 n \| \frac{y}{1+|y|^2}
\|_{L^2} \|  \frac{1}{1+ |y|^2} \|_{L^2} \\
& \sim 2 \| y \sqrt n \psi_n(ny) \|_{L^2} \| \sqrt n \psi_n(ny)
\|_{L^2} \\
& \sim 2 \| y \Ai(x_0-y) \phi_n(y) \|_{L^2} \| \Ai(x_0-y) \phi_n(y)
\|_{L^2}.
\end{align*}
As $\phi_n$ concentrates at point $0$, we deduce
\begin{equation} \label{U(1)phi_n}
|U(1)\phi_n|^2(x_0) \sim 2 | \Ai(x_0) |^2 \| y\phi_n(y)
  \|_{L^2} \| \phi_n(y)\|_{L^2} \quad \text{as } n \to \infty,
\end{equation}
which proves that the sharp constant in the first inequality is $2 \|
\Ai
\|_{L^\infty}^2$.

\bigskip

For the second inequality (estimate of the
derivative), we have as for the first inequality~:
\begin{align*}
(U(1)\phi U(1)\bar\psi_x)(x) & = \iint \Ai(x-y) \phi(y) \Ai'(x-z)
\bar\psi(z)
\frac{y-z}{y-z} dydz \\
& = \int \Ai'(x-y) y\phi(y)  \left( \int \frac{\Ai(x-z)\phi(z)}{y-z} dz
\right)  dy \\
& \qquad - \int_z \Ai'(x-y) \phi(y) \left( \int
\frac{\Ai(x-z)z\bar\psi(z)}{z-y} dz \right) dy \\
& = \int  \Ai'(x-y) y\phi(y) \q H_{z \mapsto y}(\Ai(x- z) \phi(z))(y)
dy \\
& \qquad + \int \Ai'(x-y)
\phi(y) \q H_{z \to y}(\Ai(x-z) z\bar \psi(z))(y) dy.
\end{align*}
Denote $\omega_x(y) = \frac{1}{\sqrt{1+|x-y|}}$~: $\omega_x^{-1} \in
A_2$
(with the notation of \cite{Ste70b}),
so that there exists $C$ not depending on $x$ such that
\[ \forall v, \quad \int | \q H v|^2 \omega_x^{-1} \le C \int |v|^2
\omega_x^{-1}. \]
Recall the well-know asymptotic $|\Ai'(x)| \le C(1 + |x|^{1/4})$. Then
\begin{align*}
\lefteqn{ \left| \int \Ai'(x-y) y\phi(y) \q H(\Ai(x- \cdot)
\bar\psi)(y) dy
\right| } \\
& \le  \left( \int |\Ai'(x-y) y\phi(y)|^2
  \omega_x dy \right)^{1/2}  \left(  \int |\q H(\Ai(x- \cdot)
\bar\psi)(y)|^2
  \omega_x^{-1}(y) dy\right)^{1/2} \\
& \le C \| y \phi(x) \|_{L^2} \left(  \int  |\Ai(x- y) \bar \psi(y)|^2
  \omega_y^{-1}(y) dy \right)^{1/2} \\
& \le C \| y \phi \|_{L^2} \| \psi \|_{L^2}.
\end{align*}
In the same way,
\begin{align*}
\lefteqn{ \left| \int \Ai'(x-y) \phi(y) \q H_{z \to y}(\Ai(x-z)
z\bar\phi(z))(y) dy \right| } \\
& \le \left(  \int |\Ai'(x-y) \phi(y)|^2 \omega_x(y)  \right)^{1/2}
\left(  \int  |\q H_{z \to y}(\Ai(x- z) z \psi(z))(y)|^2
  \omega_x^{-1}(y) dy \right)^{1/2} \\
& \le C \| \phi \|_{L^2} \left(  \int  |\Ai(x- y) y \psi(y) |^2
\omega_x^{-1}(y) dy \right)^{1/2} \\
& \le  C \| \phi \|_{L^2}  \| y \psi \|_{L^2}.
\end{align*}
So that :
\[ \| U(1)\phi U(1)\bar\psi_x \|_{L^\infty} \le 2C ( \| \phi \|_{L^2}
\| x \psi
\|_{L^2} + \| \psi \|_{L^2} \| x \phi \|_{L^2}). \]
Up to scaling and replacing $\psi$ by $\bar \psi$, this is the second
inequality.
\end{proof}

\begin{nb}
This proof (especially (\ref{CSineq})) is reminiscent of that in
\cite{LL01} (see also \cite{DV07})
\[ \| \phi \|_{L^\infty}^2 \le \| \phi \|_{L^2} \| \phi' \|_{L^2}, \]
where the constant is sharp and minimizers are $C e^{-\lambda |x|}$.
This has application to the Schrödinger group  $\q U(t)$ (i.e
$\widehat{\q U(t) \phi} = e^{it\xi^2} \hat \phi$). We have the
following Schrödinger version of estimate (\ref{estlinfty}) (notice that
$\q U(t) x \q U(-t) = e^{\frac{i|x|^2}{4t}} \frac{it}{2} \partial_x
e^{-\frac{i|x|^2}{4t}}$) ~:
\[ \| \psi \|_{L^\infty}^2  = \| e^{\frac{i|x|^2}{4t}} \psi
\|_{L^\infty}^2
\le \| e^{i \frac{|x|^2}{4t}} \psi \|_{L^2} \| e^{i \frac{|x|^2}{4t}}
\partial_x e^{i \frac{|x|^2}{4t}} \psi \|_{L^2}
  \le \frac{2}{t} \| \psi \|_{L^2} \| \q U(t) x \q U(-t) \psi \|_{L^2}.
\]
\end{nb}

From Theorem 1, we can easily obtain the optimal decay in a scaling
sharp Besov like space. Let $\varphi \in \q D(\m R)$ be non-negative
with
support in $]-2,2[$ and such that $\varphi$ equals $1$ in a
neighbourhood
of $[-1.5,1.5]$. Denote $\psi(x) = \varphi(2x) -\varphi(x)$ and
$\psi_j(x)
= \psi(x/2^j)$. Finally introduce
\[ \| \phi \|_{N_t} = \sum_{j = -\infty}^\infty 2^{j/2} \| \psi_j U(-t)
\phi \|_{L^2}. \]
\begin{cor}
We have~:
\[ \| \phi \|_{L^\infty} \le C t^{-1/3} \| \phi \|_{N_t}. \]
\end{cor}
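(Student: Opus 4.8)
The plan is to run the scaling-sharp pointwise estimate of Theorem 1 on each piece of a dyadic decomposition of $U(-t)\phi$ in physical space, and then sum. Write $g = U(-t)\phi$, so that $\phi = U(t)g$. Since $\psi_j(x) = \varphi(x/2^{j-1}) - \varphi(x/2^j)$, the sum over $j$ telescopes and $\sum_{j\in\m Z}\psi_j(x) = -1$ for every $x\neq 0$ (the two boundary limits give $0 - \varphi(0) = -1$). Hence $g = -\sum_j \psi_j g$, and by the triangle inequality in $L^\infty$,
\[ \|\phi\|_{L^\infty} = \Big\| U(t)\sum_j \psi_j g \Big\|_{L^\infty} \le \sum_j \|U(t)(\psi_j g)\|_{L^\infty}. \]

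Next I would estimate each term with the first inequality of Theorem 1, applied with $\psi_j g$ in place of $\phi$ (each $\psi_j g$ is compactly supported, so the hypotheses hold automatically):
\[ \|U(t)(\psi_j g)\|_{L^\infty}^2 \le 2\|\Ai\|_{L^\infty}^2\, t^{-2/3}\, \|\psi_j g\|_{L^2}\,\|x\,\psi_j g\|_{L^2}. \]
The decisive point is that $\psi_j$ is supported in the annulus $\{c\,2^j \le |x| \le C\,2^j\}$, so that $|x| \le C 2^j$ there and $\|x\,\psi_j g\|_{L^2} \le C 2^j \|\psi_j g\|_{L^2}$. Plugging this in and taking square roots converts the weight $2^j$ into the factor $2^{j/2}$:
\[ \|U(t)(\psi_j g)\|_{L^\infty} \le C\, t^{-1/3}\, 2^{j/2}\, \|\psi_j g\|_{L^2}. \]
Summing over $j$ and recognizing $\|\phi\|_{N_t} = \sum_j 2^{j/2}\|\psi_j U(-t)\phi\|_{L^2}$ then yields the claim.

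The remaining care is purely with convergence and bookkeeping, which is where I expect the only (minor) obstacle to lie. If $\|\phi\|_{N_t}=\infty$ there is nothing to prove, so assume it finite; then the last display shows $\sum_j U(t)(\psi_j g)$ converges absolutely in $L^\infty$, with sup-norm bounded by $C t^{-1/3}\|\phi\|_{N_t}$. Independently, the partial sums $\sum_{j=a}^b \psi_j g = \big(\varphi(\cdot/2^{a-1}) - \varphi(\cdot/2^b)\big)\,g$ are dominated by $2\|\varphi\|_{L^\infty}|g| \in L^2$ and converge pointwise to $-g$, so by dominated convergence $\sum_j \psi_j g = -g$ in $L^2$ and $U(t)\sum_j\psi_j g = -\phi$. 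The two limits must then agree almost everywhere, so $\phi$ equals the $L^\infty$-limit and inherits its bound. The one thing to get exactly right is that the annular support of $\psi_j$ is what turns the $x$-weight into the dyadic gain $2^{j/2}$ matching the definition of $\|\cdot\|_{N_t}$; with that identification the sharp $t^{-1/3}$ decay comes out with no loss.
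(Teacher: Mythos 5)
Your proof is correct and takes essentially the same route as the paper: decompose $U(-t)\phi$ with the dyadic cutoffs $\psi_j$, apply the first estimate of Theorem 1 to each piece, use the annular support of $\psi_j$ to convert the weight $x$ into the factor $2^j$, and sum to recover $\| \phi \|_{N_t}$. Your two refinements --- the observation that the telescoping sum is $-1$ rather than $+1$, and the dominated-convergence argument identifying the $L^\infty$-limit with $\phi$ --- are harmless bookkeeping points that the paper glosses over.
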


\begin{proof}
Notice that $|x \psi_j(x)| \le 2^{j+1} \psi_j(x)$. As $\phi = \sum_j
U(t) \psi_j U(-t)\phi$, we have~:
\begin{align*}
\| \phi \|_{L^\infty} & \le \sum_j \| U(t) \psi_j U(-t)\phi
\|_{L^\infty}  \\
& \le  C t^{-1/3} \sum_j \| U(t) \psi_j U(-t) \phi
\|_{L^2}^{1/2} \| U(t) x U(-t) U(t) \psi_j U(-t) \phi
\|_{L^2}^{1/2} \\
& \le C t^{-1/3} \sum_j  \|  U(t) \psi_j U(-t) \phi
\|_{L^2}^{1/2} \| x \psi_j U(-t) \phi\|_{L^2}^{1/2} \\
& \le C t^{-1/3} \sum_j \|  U(t) \psi_j U(-t) \phi
\|_{L^2}^{1/2} 2^{j/2} \| U(t) \psi_j U(-t) \phi\|_{L^2}^{1/2} \\
& \le C t^{-1/3} \| \phi \|_{N_t}. \qedhere
\end{align*}
\end{proof}

\normalsize


\begin{thebibliography}{00}

\bibitem{DV07} J. Duoandikoetxea, L. Vega, 
Some weighted {G}agliardo-{N}irenberg inequalities and applications, 
Proc. Amer. Math. Soc. 135 (2007) 2795--2802 (electronic).

\bibitem{HN98} N. Hayashi, P. I. Naumkin, 
Large time asymptotics of solutions to the generalized {K}orteweg-{D}e
{V}ries equation, 
J. Funct. Anal. 159 (1998) 110--136.

\bibitem{LL01} E. H. Lieb, M. Loss, Analysis, 
American Mathematical Society, Providence, RI,
2001.
      
\bibitem{Ste70b} E. M. Stein,
Singular integrals and differentiability properties of functions,
Princeton University Press, Princeton, N.J., 1970.

\end{thebibliography}

\bigskip
\bigskip

\noindent \textsc{Raphaël Côte}\\
Centre de Mathématiques Laurent Schwartz, École polytechnique\\
91128 Palaiseau Cedex, France\\
\url{cote@math.polytechnique.fr}

\bigskip

\noindent \textsc{Luis Vega}\\
Departamento de Matemáticas, Universidad del País Vasco\\
Aptdo. 644, 48080 Bilbao, España\\
\url{luis.vega@ehu.es}

\end{document}